\date{\today}
\theoremstyle{theorem}
    \newtheorem{theorem}{Theorem}
    \newtheorem{lemma}[theorem]{Lemma}
\theoremstyle{definition} 
    \newtheorem{definition}[theorem]{Definition}
    \newtheorem{fact}[theorem]{Fact}
    \newtheorem{remark}[theorem]{Remark}
    \newtheorem{example}[theorem]{Example}
    \newtheorem{exercise}[theorem]{Exercise}
\def\l{\left}
\def\r{\right}
\def\<{\langle}
\def\>{\rangle}
\def\bar{\overline}
\def\P{{\bf P}}
\newcommand\tr{{\mbox{tr}}}
\newcommand\mnote[1]{} 
\newcommand\be{\begin{equation*}}
\newcommand\ee{\end{equation*}}
\newcommand\ben{\begin{equation}}
\newcommand\een{\end{equation}}
\newcommand\bes{\begin{eqnarray*}}
\newcommand\ees{\end{eqnarray*}}
\newcommand\bex{\begin{exercise}}
\newcommand\eex{\end{exercise}}
\newcommand\beg{\begin{example}}
\newcommand\eeg{\end{example}}
\newcommand\benu{\begin{enumerate}}
\newcommand\eenu{\end{enumerate}}
\newcommand\beit{\begin{itemize}}
\newcommand\eeit{\end{itemize}}
\newcommand\berk{\begin{remark}}
\newcommand\eerk{\end{remark}}
\newcommand\bdefn{\begin{defintion}}
\newcommand\edefn{\end{definition}}
\newcommand\bthm{\begin{theorem}}
\newcommand\ethm{\end{theorem}}
\newcommand\bprf{\begin{proof}}
\newcommand\eprf{\end{proof}}
\newcommand\blem{\begin{lemma}}
\newcommand\elem{\end{lemma}}
\newcommand{\sm}{{\raise0.3ex\hbox{$\scriptstyle \setminus$}}}
\def\l{\left}
\def\r{\right}
\def\CHI{\mathchoice%
{\raise2pt\hbox{$\chi$}}%
{\raise2pt\hbox{$\chi$}}%
{\raise1.3pt\hbox{$\scriptstyle\chi$}}%
{\raise0.8pt\hbox{$\scriptscriptstyle\chi$}}}
\def\smalloplus{\raise1pt\hbox{$\,\scriptstyle \oplus\;$}}
\title{\sloppy Equality of  Lyapunov and stability exponents  for products of  isotropic random matrices }
\author{Nanda Kishore Reddy}
\address{Department of Mathematics\\
        Indian Institute of Science\\
        Bangalore 560012, India}
\email{kishore11@math.iisc.ernet.in}
\thanks{Partially supported by UGC Centre for Advanced Studies. Research  supported by CSIR-SPM fellowship, CSIR, Government of India. }
\date{\today}
\begin{document}

\maketitle
\begin{abstract}

   We show that Lyapunov exponents and stability exponents are equal in the case of product of $i.i.d$ isotropic(also known as bi-unitarily invariant) random matrices. We also  derive aysmptotic distribution of singular values and eigenvalues of these product random matrices. Moreover,  Lyapunov exponents are distinct, unless the  random matrices are random scalar multiples of  Haar  unitary matrices or orthogonal matrices. As a corollary of above result, we show  probability that  product of $n$ $i.i.d$ real isotropic random matrices has all eigenvalues real goes to one as $n \to \infty$. Also,  in the proof of a lemma, we  observe that a  real (complex) Ginibre matrix can be written as product of a random lower triangular matrix and an independent truncated Haar orthogonal (unitary) matrix.
\end{abstract}

\section{Definitions and introduction}

 Let $M_1, M_2,\ldots$ be sequence of $i.i.d$  random matrices of order $d$.  Define $\bm{\sigma}_n$    to be diagonal matrix with singular values  of product matrix $\mathcal{P}_n=M_1M_2..M_n$ in the diagonal  in decreasing order and similarly  $\bm{\lambda}_n$ to be diagonal matrix with eigenvalues  of  $\mathcal{P}_n$ in the diagonal  in decreasing order of absolute values, for  $n=1,2,\ldots$. Let $|\bm{\lambda}_n|^\frac{1}{n}$ and $|\bm{\sigma}_n|^\frac{1}{n}$ denote diagonal matrices with non-negative $n$-th roots of absolute values of diagonal entries of $\bm{\lambda}_n$ and $\bm{\sigma}_n$ in the diagonal, respectively. 
 
       Define $\bm{\sigma}:= \lim_{n\to \infty} |\bm{\sigma}_n|^\frac{1}{n} $ and $\bm{\lambda}:= \lim_{n\to \infty} |\bm{\lambda}_n|^\frac{1}{n} $,  if the limits exist. Then diagonal elements of $\ln\bm{\sigma}$  and $\ln\bm{\lambda}$  are called \textit{Lyapunov exponents} and \textit{stability exponents} for products of $i.i.d$ random matrices, respectively. 
In other words, they are rates of exponential growth(or decay) of singular values and eigenvalues of   product matrices  $\mathcal{P}_n$, respectively as $n\to \infty$.   
 
 \vspace{.2cm}  
 We consider both real and complex random matrices in this paper.For the sake of simplicity, we  restrict
ourselves mostly to complex random matrices . But all the definitions and statements, along with proofs, carry over immediately  to real case (by replacing everywhere unitary matrices by  orthogonal matrices). 

 \begin{definition}
A random matrix $M$ is said to be \textit{isotropic} if probability distribution of $UMV$ is same as that of $M$, for all unitary matrices $U,V$.
 \end{definition}

 They also go by the names of bi-unitarily   invariant and rotation invariant random matrices. It  follows from definition that distribution of $UMV$ is same as that of $M$, if $U,V$ are Haar distributed random unitary matrices independent of $M$ and each other. $M=PDQ$ be the singular value decomposition  of $M$, then $UMV=UPDQV$. Since $U,V$ are Haar unitary matrices independent of $D$, then $UP$ and $QV$ are also independent Haar unitary matrices, independent of $D$.

So, it follows that $N=UDV$, where $D$  is  diagonal matrix of singular values of rotation invariant random matrix $M$ and $U,V$ are independent Haar unitary matrices independent of $D$, has same distribution as that of $M$. So, $N$ is also isotropic and this can be taken as alternate definition of isotropic random matrices. 

\begin{definition}
A product matrix $M=UDV$ is said to be \textit{isotropic} if $D$ is random diagonal matrix with non-negative real diagonal entries and $U,V$ are  Haar unitary matrices independent of $D$ and each other.
\end{definition}

\begin{definition}
A product matrix $M=UDV$ is said to be \textit{right isotropic} or \textit{right rotation invariant} if $U$ is random unitary matrix, $D$ is random diagonal matrix with non-negative real diagonal entries and $V$ is Haar unitary matrix independent of $D$ and $U$. 
\end{definition}

Lyapunov exponents for products of  $i.i.d$ random matrices and criteria for their distinctness  have been discussed in \cite{prm}. Exact expression for Lyapunov exponents in case of isotropic random matrices has been derived in \cite{newman}. In the  
same paper, explicit calculations and asymptotics  have been done in case of  Gaussian real random  matrices, also called real Ginibre ensemble. Explicit calculations of Lyapunov exponents for modified Complex Ginibre ensembles have been done in \cite{forrester4}. Stability exponents have been considered first in the setting of dynamical systems in \cite{goldman}  and therein equality of Lyapunov and stability exponents has been conjectured based upon plausible arguments and numerical results. Recent comparative studies (\cite{akemann1}, \cite{ipsen}) of  Lyapunov exponents  and stability exponents in the case of Ginibre matrix ensembles  have verified the conjecture to be true  in the respective cases. And also (\cite{forrester2}) mentions this in the case of random truncated unitary matrices. For a summary of results on this topic, we refer reader to \cite{ipsen2}. In \cite{akemann1}, a proof of this conjecture in the case
of $2 \times 2$ isotropic random matrices  has been given. In section \ref{equ}, we give a proof of this conjecture for isotropic random matrices of any order.
\vspace{.2cm}
  
Another phenomenon of interest in products of random matrices is convergence of probability, that all eigenvalues of real random product matrix $M_1M_2...M_n$ are real, to one as $n \to \infty$. This phenomenon has been first studied extensively numerically in case of real Gaussian matrices in  \cite{arul} and rigorous proofs of those results have been obtained  in \cite{forrester1}. Numerical study  of the same phenomenon for  random matrices with $i.i.d$ elements from various other  distributions  uniform, Laplace and Cauchy has been done in \cite{kavita}. In section \ref{final1}, we prove the same in case of  real isotropic random matrices, as a corollary to existence and distinctness  of stability exponents, generalizing the case of real Gaussian matrices in the direction of isotropic property.\vspace{.2cm}

Asymptotic distribution of first order fluctuations of action of product random matrices is known to be Gaussian, see Chapter V of part A in \cite{prm} for related central limit theorems. But the knowledge of variances of these distributions has been very  limited until recently. In \cite{akemann1}, asymptotic distributions of first order fluctuations of (logarithm of) both singular values and moduli of eigenvalues for product of large number of Ginibre matrices have been computed and shown to be equal. In \cite{forrester2}, variances of asymptotic distributions of first order fluctuations of singular values in cases of   generalised Ginibre matrices and truncated Haar unitary matrices have been computed. In section \ref{fluc}, we compute first order asymptotic joint probability  density of singular values of  $\mathcal{P}_n$, also that of moduli of eigenvalues of $\mathcal{P}_n$ and show them to be equal. 
\vspace{.2cm}

The paper is organised as follows. In section \ref{gen} we give an algorithm for  generating products of isotropic random matrices, using singular value decompositions, which would help us to see the relation  between eigenvalues and singular values of product matrices very clearly. Thereafter it becomes very easy to deduce the equality of Lyapunov and stability exponents which is done in section \ref{equ}.  In section \ref{fluc}, we derive first order asymptotic distribution of singular values and eigenvalues of $\mathcal{P}_n$.  In final  section \ref{final1}    probability of event that all eigenvalues of product $M_1M_2..M_n$ of $n$ $i.i.d$ isotropic real random matrices   are real is shown to converge to one as $n \to \infty$.

\section{ Products of right isotropic random matrices}\label{gen}
 
  Let $\mathcal{P}_n=M_1M_2..M_n$ be product of $i.i.d$ random matrices. $\mathcal{P}_n=\mathcal{U}_n\bm{\sigma}_n\mathcal{V}_n$ be singular value decomposition of $\mathcal{P}_n$ with diagonal entries of $\bm{\sigma}_n$ in descending order. From multiplicative ergodic theorem of Oseledec \cite{oseledec}, \cite{raghu}  or Corollary 1.3 on page 79 of \cite{prm}, we know that  under certain conditions on distribution of $M_1$, $\mathcal{U}_n$ converges to a fixed unitary matrix $\mathcal{U}_{\infty}$ and $|\bm{\sigma}_n|^\frac{1}{n}$ converges to a fixed diagonal matrix $\bm{\sigma}$. But knowledge of matrices $\{\mathcal{V}_n\}_{n=1}^{\infty}$ is also required in order to study the limiting behavior of eigenvalues of $\mathcal{P}_n$. The advantage with considering isotropic or just right rotation invariant random matrices is the full knowledge of distribution of  $\{\mathcal{V}_n\}_{n=1}^{\infty}$. It is a sequence of independent Haar unitary matrices. We refer the  reader to a recent paper \cite{peter} to appreciate and understand the role of Haar unitary matrices in   random matrix theory.
\vspace{.2cm}

Let  $M_i=U_iD_iV_i$ for $i=1,2\ldots$  be sequence of $i.i.d$ right rotation invariant random matrices  where $V_1,V_2\ldots$ are all independent Haar unitary matrices, independent of positive diagonal matrices $D_1,D_2\ldots$ and unitary matrices $U_1,U_2\ldots$.
$$\mathcal{P}_{n+1}=\mathcal{P}_n{M}_{n+1}=\mathcal{U}_n\bm{\sigma}_n\mathcal{V}_nU_{n+1}D_{n+1}V_{n+1}.$$ 

Let $\bm{\sigma}_n\mathcal{V}_nU_{n+1}D_{n+1}= R_{n+1}\bm{\sigma}_{n+1}S_{n+1}$ be singular value decomposition of $\bm{\sigma}_n\mathcal{V}_nU_{n+1}D_{n+1}$ with diagonal entries of $\bm{\sigma}_{n+1}$ in descending order. Therefore 
$$\mathcal{P}_{n+1}=\mathcal{U}_{n}R_{n+1}\bm{\sigma}_{n+1}S_{n+1}{V}_{n+1}=\mathcal{U}_{n+1}
\bm{\sigma}_{n+1}\mathcal{V}_{n+1}$$ 
where $\mathcal{U}_{n+1}=\mathcal{U}_nR_{n+1}$, $\mathcal{V}_{n+1}=S_{n+1}V_{n+1}$. Since $V_{n+1}$ is Haar unitary matrix independent of preceding matrices  $\{\mathcal{U}_k, \bm{\sigma}_k, U_k, D_k\}_{k=1}^{n+1}$ and  $\{\mathcal{V}_k\}_{k=1}^n$, we have that
$\mathcal{V}_{n+1}$ is also Haar distributed and independent of  $\{\mathcal{U}_k, \bm{\sigma}_k, U_k, D_k\}_{k=1}^{n+1}$ and  $\{\mathcal{V}_k\}_{k=1}^n$. This gives us the key idea that $\mathcal{V}_{n+1}\mathcal{U}_{n+1}$ is Haar distributed and independent of $\{\mathcal{V}_k\mathcal{U}_k\}_{k=1}^{n}$.
\begin{remark}\label{remark}
For  $\{M_n\}_{n=1}^{\infty}$, a sequence  of $i.i.d$ right rotation invariant random matrices, 
$\{\mathcal{V}_n\mathcal{U}_n\}_{n=1}^{\infty}$ is a sequence of independent Haar unitary matrices.

\end{remark}

Observe that $\bm{\sigma}_{n+1}$ is diagonal matrix of singular values of 
 $\bm{\sigma}_n\mathcal{V}_nU_{n+1}D_{n+1}$. Since $\mathcal{V}_{n}$ is Haar unitary matrix independent of $\{U_{k}, D_k\}_{k=1}^{n+1}$,  $\{\mathcal{V}_k\}_{k=1}^{n-1}$ and preceding diagonal matrices  $\{ \bm{\sigma}_k\}_{k=1}^{n}$, we have that $\mathcal{V}_nU_{n+1}$ is Haar unitary independent of $\{ \bm{\sigma}_k\}_{k=1}^{n}$, $\{\mathcal{V}_kU_{k+1}\}_{k=1}^{n-1}$ and $\{D_{k}\}_{k=1}^{n+1}$. Therefore $\{\mathcal{V}_nU_{n+1}\}_{n=1}^{\infty}$ is a sequence of $i.i.d$ Haar unitary matrices generated independently of $i.i.d$ sequence of random diagonal matrices $\{D_n\}_{n=1}^{\infty}$.  The sequence of singular value diagonal matrices $\{\bm{\sigma}_n\}_{n=1}^{\infty}$ is defined recursively by setting $\bm{\sigma}_1=D_1$ and taking  $\bm{\sigma}_{n+1}$ to be the diagonal matrix with singular values of $\bm{\sigma}_n\mathcal{V}_n U_{n+1} D_{n+1}$ in the diagonal, in descending order, for $n=1,2\ldots$. 
 
 \begin{remark}\label{remark7}
The probability distribution of $\{ \bm{\sigma}_n\}_{n=1}^{\infty}$ doesn't depend on the distribution of $U_1$, left singular vectors of $M_1$. In other words, the probability distribution of $\{ \bm{\sigma}_n\}_{n=1}^{\infty}$ remains the same in the case of  $i.i.d$ isotropic random matrices whose  distribution of singular values is same as that of $M_1$. 
\end{remark}

  Using these  observations, we have an alternate algorithm of generating   products of right rotation invariant random matrices 
$\mathcal{P}_1,\mathcal{P}_2,\ldots,\mathcal{P}_n,\ldots$ sequentially.

At the first step, generate   $M_1=U_1D_1V_1$. Call $D_1$ to be $\bm{\sigma}_1$ and  $M_1$ to be $\mathcal{P}_1$.

For $n\geq 1$,  at $n+1$-step,  we have $\mathcal{P}_n=\mathcal{U}_n\bm{\sigma}_n\mathcal{V}_n$  and 

 $(1)$   generate   a diagonal matrix $D_{n+1}$ from the distribution of $D_1$ independently. Compute  singular value decomposition of $\bm{\sigma}_n \mathcal{V}_n D_{n+1}$ to get $\bm{\sigma}_n\mathcal{V}_n D_{n+1}=R_{n+1}\bm{\sigma}_{n+1}S_{n+1}$, with diagonal entries of diagonal matrix $\bm{\sigma}_{n+1}$ in descending order. Call      $\mathcal{U}_nR_{n+1}$ to be $\mathcal{U}_{n+1}$ and

 $(2)$ generate a   Haar unitary matrix $\mathcal{V}_{n+1}$ independently.  Set  $\mathcal{U}_{n+1}\bm{\sigma}_{n+1}\mathcal{V}_{n+1}$ to be $\mathcal{P}_{n+1}$.
\vspace{.2cm}

This recursive nature of singular values of  isotropic  random matrix products  has been used in \cite{kuij} to derive singular value statistics of isotropic  random matrix products with singular values of  repulsive(Vandermonde) nature. Also, in  a very recent  study \cite{mario}, an exact relation between singular value and eigenvalue statistics of isotropic random matrices with repulsive(Vandermonde) singular values and  eigenvalues has been established.
 
\section{Asymptotic relation between eigenvalues and singular values}\label{equ}    
 
 Asymptotic behaviour of singular values of product random matrices  has been discussed in detail in \cite{prm}. We combine  Proposition 5.6 from Chapter III,  Theorem 1.2  and Proposition 2.5 from Chapter IV of part A in \cite{prm} to arrive at the following statement (though part A of the book deals with real random matrices mainly, all the statements are  true in complex case also, says the author in Introduction). 
\vspace{.2cm}

\begin{fact}\label{fact}
 Let $M_1,M_2\ldots$ be sequence of $i.i.d$ invertible   random matrices of order $d$ with common distribution $\mu$, such that $\mathbb{E}(\log^{+}\|M_1\|)$ is finite (spectral norm is used). If there exists a matrix $A$ such that $\|A\|^{-1}A$ is not unitary  and $UAU^{-1}$ belongs to support of measure $\mu$ for every unitary matrix $U$, then $|\bm{\sigma}_n|^\frac{1}{n}$ converges almost surely  to a deterministic diagonal matrix $\bm{\sigma}$ and non-zero diagonal elements of $\bm{\sigma}$ are finite and  distinct. Log values of diagonal elements of $\bm{\sigma}$ are called Lyapunov exponents.
\end{fact}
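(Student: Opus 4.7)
My plan is to establish the three conclusions of the Fact — almost-sure convergence of $|\bm{\sigma}_n|^{1/n}$, determinism and finiteness, and distinctness of the nonzero entries — in that sequence, following the Furstenberg--Kesten / Oseledec / Guivarc'h--Raugi path for products of random matrices.

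For convergence and determinism I would apply Kingman's subadditive ergodic theorem on the i.i.d.\ shift space to each cocycle $f_k(n) := \log \|\Lambda^k \mathcal{P}_n\|$, $1 \leq k \leq d$. Subadditivity follows from $\|\Lambda^k(AB)\| \leq \|\Lambda^k A\|\,\|\Lambda^k B\|$, and the one-sided moment bound $\mathbb{E}[f_k(1)^+] \leq k\,\mathbb{E}[\log^+\|M_1\|] < \infty$ is exactly the required integrability. Ergodicity of the i.i.d.\ shift makes the almost-sure limit $s_k = \lim \frac{1}{n} f_k(n)$ deterministic. Since $\|\Lambda^k \mathcal{P}_n\| = \sigma_1(\mathcal{P}_n)\cdots\sigma_k(\mathcal{P}_n)$, the differences $\gamma_k := s_k - s_{k-1}$ are the almost-sure limits of $\frac{1}{n}\log\sigma_k(\mathcal{P}_n)$ and furnish the diagonal entries $e^{\gamma_k}$ of $\bm{\sigma}$. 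The upper bound $\gamma_1 \leq \mathbb{E}[\log\|M_1\|]$ is immediate, and invertibility of each $M_i$ keeps $\sigma_k(\mathcal{P}_n)$ strictly positive, so the nonzero $\gamma_k$ are automatically finite (the value $\gamma_k = -\infty$ is permitted because no integrability on $\log\|M_1^{-1}\|$ is assumed).

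The principal obstacle is distinctness of the Lyapunov spectrum. My plan is to verify the Guivarc'h--Raugi hypotheses on every exterior power $\Lambda^k \mathbb{C}^d$ for $1 \leq k \leq d-1$: the closed semigroup $T_\mu$ generated by $\supp(\mu)$ should be both strongly irreducible and contracting on $\Lambda^k \mathbb{C}^d$. The hypothesis on $A$ is tailored for exactly this. The inclusion $\{U A U^{-1} : U \text{ unitary}\} \subseteq \supp(\mu)$, combined with $\Lambda^k(U A U^{-1}) = (\Lambda^k U)(\Lambda^k A)(\Lambda^k U)^{-1}$ and the irreducibility of the unitary action on $\Lambda^k \mathbb{C}^d$, gives strong irreducibility. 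For contractivity I would use that $\|A\|^{-1}A$ is not unitary, so $\sigma_1(A) > \sigma_d(A)$; this single gap is insufficient on its own for every exterior power, but a product of sufficiently many unitary conjugates of $A$ can be arranged to have all its singular values strictly ordered, and iterating such an element produces sequences in $T_\mu$ whose normalised action on $\Lambda^k \mathbb{C}^d$ converges projectively to a rank-one operator for every $k$. Invoking Guivarc'h--Raugi on each exterior power then yields $\gamma_k > \gamma_{k+1}$ whenever both are finite, which is the claimed distinctness. The most delicate step, and the one I expect to be the real obstacle, is fabricating an element of $T_\mu$ with fully generic singular-value spectrum from the unitary-conjugation orbit of a single non-scalar matrix $A$ — the rest is standard once this contractivity input is supplied.
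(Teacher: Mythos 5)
The first thing to say is that the paper offers no proof of this Fact at all: it is explicitly assembled by citation from Proposition 5.6 of Chapter III and Theorem 1.2 and Proposition 2.5 of Chapter IV of Part A of Bougerol--Lacroix, so the only meaningful comparison is with the proofs of those cited results. Your architecture coincides with theirs exactly: Kingman's subadditive ergodic theorem applied to the cocycles $\log\|\Lambda^k\mathcal{P}_n\|$ gives existence, determinism and finiteness of the limits (Furstenberg--Kesten), and the Guivarc'h--Raugi simplicity criterion --- $k$-strong irreducibility together with $k$-contractivity of the closed semigroup $T_\mu$ for each $1\le k\le d-1$ --- gives distinctness. That skeleton is correct and is the standard route.

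There are, however, two places where your verification of the Guivarc'h--Raugi hypotheses is incomplete, and the second is the step that carries the entire specific hypothesis of the statement. First, strong irreducibility on $\Lambda^k\C^d$ does not follow from irreducibility of the unitary representation alone: strong irreducibility forbids invariant \emph{finite unions} of proper subspaces, so you need in addition the connectedness of $U(d)$ (a connected group permuting finitely many subspaces must stabilize each one) together with the fact that no proper subspace of $\Lambda^k\C^d$ is invariant under all $\Lambda^k(UAU^{-1})$. Second, and more seriously, the contractivity input is asserted rather than proved: you say that ``a product of sufficiently many unitary conjugates of $A$ can be arranged to have all its singular values strictly ordered,'' and you correctly identify this as the real obstacle, but you do not supply the construction --- and this construction is precisely the content of the propositions the paper cites. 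A concrete way to close it: writing $A=PDQ$ in singular value form, the products $(U_1AU_1^{-1})(U_2AU_2^{-1})\cdots(U_mAU_m^{-1})$ realize, up to outer unitary factors that do not affect singular values, every matrix of the form $DW_1DW_2\cdots W_{m-1}D$ with the $W_j$ arbitrary unitaries; one must then exhibit, for the fixed non-scalar diagonal $D\ge 0$, a choice of the $W_j$ for which this product has simple singular spectrum (for instance by a rank-one-perturbation/interlacing computation for small $m$, or by an analyticity-plus-one-example genericity argument). Until such an element of $T_\mu$ is exhibited, the distinctness of the Lyapunov spectrum --- the only nontrivial conclusion of the Fact --- is not established.
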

\vspace{.2cm}

Additionally if we have that  $\mathbb{E}(\log|\det(M_1)|)$ exists and  is finite, then as $\det(\bm{\sigma}_n)=\prod_{k=1}^n |\det(M_k)|$, by strong law of large numbers, we get that  $\log({\det|\bm{\sigma}_n}|^\frac{1}{n})$ converges to ${\mathbb{E}(\log|\det(M_1)|)}$ almost surely and all diagonal elements of $\bm{\sigma}$ are non-zero $\i.e$ Lyapunov exponents are all finite.

We  Use a simple idea of comparing coefficients of   characteristic polynomial written in terms of both singular values and eigenvalues, Horn inequalities \cite{horn} and  Fact  \ref{fact}  regarding convergence of singular values, to prove the main theorem of this paper, which states that  as $n\to \infty$, eigenvalues of random isotropic product  matrix  $\mathcal{P}_n$ grow (or decay) exponentially at the same rate as that of corresponding singular values. 

\begin{theorem}\label{theorem}
Let $M_1,M_2\ldots$ be sequence of $i.i.d$ invertible  isotropic random matrices of  order $d$, such that $\mathbb{E}(\log^{+}\|M_1\|)$ is finite. Then, both $|\bm{\sigma}_n|^\frac{1}{n}$ and $|\bm{\lambda}_n|^\frac{1}{n}$  converge almost surely to the same deterministic diagonal matrix $\bm{\sigma}$, whose non-zero diagonal elements are all distinct. Additionally  if $\mathbb{E}(\log|\det(M_1)|)$ exists and  is finite, then all diagonal elements of $\bm{\sigma}$ are non-zero.
\end{theorem}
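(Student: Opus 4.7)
First I would check that Fact~\ref{fact} applies: for an isotropic $M_1$, the law $\mu$ is invariant under two-sided unitary action, so the Fact's hypothesis on the support fails only if every $M$ in the support is a scalar multiple of a unitary matrix. In that degenerate case all singular values of $\mathcal{P}_n$ coincide with $|\det\mathcal{P}_n|^{1/d}$, all eigenvalues have the same modulus, and the theorem is trivial. Otherwise Fact~\ref{fact} gives $|\bm{\sigma}_n|^{1/n}\to\bm{\sigma}$ almost surely with non-zero diagonal entries $\sigma_1>\sigma_2>\cdots>\sigma_r>0$ distinct, and the determinant hypothesis forces $r=d$. Writing the singular value decomposition $\mathcal{P}_n=\mathcal{U}_n\bm{\sigma}_n\mathcal{V}_n$ and using that $AB$ and $BA$ share a spectrum, the eigenvalues of $\mathcal{P}_n$ coincide with those of $\bm{\sigma}_n H_n$, where $H_n:=\mathcal{V}_n\mathcal{U}_n$. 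By Remark~\ref{remark} together with the iterative construction of Section~\ref{gen}, each $H_n$ is a Haar unitary matrix independent of $\bm{\sigma}_n$.

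\textbf{Asymptotics of the elementary symmetric polynomials of eigenvalues.} Setting $\sigma_K^{(n)}:=\prod_{i\in K}\sigma_i^{(n)}$ for $K\subseteq\{1,\dots,d\}$, the Cauchy--Binet identity applied to the $k$-th exterior power gives
\begin{equation*}
  e_k(\bm{\lambda}_n)\;=\;\operatorname{tr}\bigl(\Lambda^k(\bm{\sigma}_n H_n)\bigr)\;=\;\sum_{|K|=k}\sigma_K^{(n)}\,\det\bigl((H_n)_{K,K}\bigr).
\end{equation*}
Writing $\Sigma_i:=\log\sigma_i$ and $K_\ast=\{1,\dots,k\}$, the ratio $\sigma_{K_\ast}^{(n)}/\sigma_K^{(n)}$ grows exponentially in $n$ for every $K\neq K_\ast$ by distinctness of the $\Sigma_i$. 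All subdeterminants $\det((H_n)_{K,K})$ are bounded in modulus by $1$, and the principal one does not decay exponentially: the law of $|\det((H_n)_{K_\ast,K_\ast})|$ is the modulus of the determinant of a $k\times k$ truncation of a Haar unitary, which has only polynomial small-ball behaviour at zero, so a Borel--Cantelli argument yields $|\det((H_n)_{K_\ast,K_\ast})|\geq e^{-\delta n}$ eventually, a.s., for every $\delta>0$. Combining the dominance of $K_\ast$ with this lower bound, $\tfrac1n\log|e_k(\bm{\lambda}_n)|\to\Sigma_1+\cdots+\Sigma_k$ almost surely.

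\textbf{Finishing via Horn--Weyl.} The Horn/Weyl log-majorisation for a single matrix gives $\prod_{i=1}^k|\lambda_i^{(n)}|\leq\prod_{i=1}^k\sigma_i^{(n)}$, so $\limsup\tfrac1n\log\prod_{i=1}^k|\lambda_i^{(n)}|\leq\Sigma_1+\cdots+\Sigma_k$. The triangle inequality in the opposite direction reads $|e_k(\bm{\lambda}_n)|\leq\binom{d}{k}\prod_{i=1}^k|\lambda_i^{(n)}|$, which combined with the previous paragraph supplies the matching lower bound. Hence $\tfrac1n\log\prod_{i=1}^k|\lambda_i^{(n)}|\to\Sigma_1+\cdots+\Sigma_k$ for every $k$, and differencing the statements for $k$ and $k-1$ gives $\tfrac1n\log|\lambda_k^{(n)}|\to\Sigma_k$ almost surely, proving the theorem.

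\textbf{Main obstacle.} The crux is the non-exponential small-ball lower bound on the principal Haar minor $|\det((H_n)_{K_\ast,K_\ast})|$; everything else reduces to the Cauchy--Binet expansion and the classical Horn--Weyl inequalities. This small-ball input is precisely where the isotropy hypothesis earns its keep, via the independence of $H_n$ from $\bm{\sigma}_n$ delivered by Remark~\ref{remark} and the construction of Section~\ref{gen}.
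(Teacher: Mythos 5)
Your proposal is correct and follows essentially the same route as the paper: the identity $e_k(\bm{\lambda}_n)=\sum_{|K|=k}\sigma_K^{(n)}\det((H_n)_{K,K})$ (which the paper obtains by comparing characteristic-polynomial coefficients rather than via $\Lambda^k$, but it is the same identity, equation \ref{idea}), dominance of the principal term, and the Horn inequalities plus a sandwich to pass from $e_k$ to the ordered moduli. The one input you assert rather than prove --- that $|\det((H_n)_{K_\ast,K_\ast})|$ decays subexponentially a.s., via polynomial small-ball behaviour and Borel--Cantelli --- is precisely the paper's Lemma \ref{lemma}, which it establishes by showing $\mathbb{E}\,|\log|[W_1]_J||<\infty$ through a decomposition of a Ginibre matrix into a triangular factor times a truncated Haar matrix.
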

\begin{proof}
If $Prob\l(\frac{M_1^{*}M_1}{{\|M_1\|}^2}=I\r) = 1$, then $M_1$ is random scalar multiple of a Haar unitary matrix. In that case both $|\bm{\sigma}_n|^\frac{1}{n}$ and $|\bm{\lambda}_n|^\frac{1}{n}$ are equal to $(\prod_{k=1}^n\|M_k\|)^\frac{1}{n}I$ which converges to $e^{\mathbb{E}(\log\|M_1\|)}I$ as $n\to \infty$, by strong law of large numbers. Hence the theorem is true in this case.
\vspace{.2cm}
  
  Suppose $Prob\l(\frac{M_1^{*}M_1}{{\|M_1\|}^2}=I\r) < 1$.   Since $M_1=U_1D_1V_1$, the above assumption ensures that, with positive probability,  $D_1$ is not a scalar multiple of identity matrix. So there exists a  diagonal matrix $A$ in the support of measure of $D_1$ such that $\|A\|^{-1}A$ is not unitary.  By the  definition of isotropic random matrices, $UAU^{-1}$ belongs to the support of measure of $M_1$ for every unitary matrix $U$. Therefore by Fact \ref{fact}, $|\bm{\sigma}_n|^\frac{1}{n}$ converges almost surely  to a diagonal matrix $\bm{\sigma}$ whose non-zero diagonal elements  are finite and  distinct. Lets say that the first $r$ diagonal entries of $\bm{\sigma}$  are non-zero, $1\leq r \leq d$.

Eigenvalues of $\mathcal{P}_n = \mathcal{U}_n\bm{\sigma}_n\mathcal{V}_n$ are same as that of $\mathcal{V}_n\mathcal{U}_n\bm{\sigma}_n$.  For the sake of simplicity of notation, we write $W_n$ in place of $\mathcal{V}_n\mathcal{U}_n$. By remark \ref{remark},    $\{{W}_n\}_{n=1}^{\infty}$ is a sequence of independent Haar unitary matrices.

For $J\subseteq \{1,2,...d\}$, $[M]_J$ denotes the determinant of the matrix formed from  matrix $M$ by deleting rows and columns whose indices are not in $J$. $|J|$ denotes cardinality of $J$. From here onwards, $J$ is always a subset of $\{1,2,..d\}$.
$$
\det(zI-\bm{\lambda}_n)= \det(zI-W_n\bm{\sigma}_n) 
$$
By comparing coefficients of $z^{d-i} \hspace{2mm}$  for $i\leq r$ in the above equation, we get 
\begin{equation}\label{idea}
\sum_{|J|=i} [\bm{\lambda}_n]_J= \sum_{|J|=i}[W_n\bm{\sigma}_n]_J=\sum_{|J|=i}[W_n]_J[\bm{\sigma}_n]_J.
\end{equation}
 We can see that $|[W_n]_J|\leqslant1$  and also, $|[W_n]_J|^\frac{1}{n}\to {1}$ as  $n\to {\infty}$ for every $J\subseteq \{1,2,...d\}$  almost surely,   from Lemma \ref{lemma}, stated and proved after this proof. Also, we have noted earlier in this proof, $|\bm{\sigma}_n|^\frac{1}{n}$ converges almost surely  to a diagonal matrix $\bm{\sigma}$ whose non-zero diagonal elements  are finite and  distinct, so we have ${[\bm{\sigma}]_J} < {[\bm{\sigma}]_{\{1,2..i\}}}$ for $|J|=i$,  $J\neq {\{1,2..i\}}$. Therefore   $\frac{[\bm{\sigma}_n]_J}{[\bm{\sigma}_n]_{\{1,2..i\}}}\to 0$ for any  $|J|=i$ and $J\neq {\{1,2..i\}} $ and $|[\bm{\sigma}_n]_{\{1,2..i\}}|^\frac{1}{n} \to [\bm{\sigma}]_{\{1,2..i\}} $ almost surely as $n\to{\infty}$,  giving us almost surely
\begin{equation*}
\lim_{n\to {\infty}} {|\sum_{|J|=i} [\bm{\lambda}_n]_J|}^{\frac{1}{n}}=\lim_{n\to {\infty}} {|\sum_{|J|=i}[W_n]_J[\bm{\sigma}_n]_J|}^{\frac{1}{n}}= [\bm{\sigma}]_{\{1,2..i\}}.
\end{equation*}
Notice that $$ |\sum_{|J|=i} [\bm{\lambda}_n]_J |\leqslant\ {d \choose i}|[\bm{\lambda}_n]_{\{1,2..i\}}|\leqslant \ {d \choose i}[\bm{\sigma}_n]_{\{1,2..i\}}$$ as the diagonal entries of $\bm{\lambda}_n$ are in descending order of absolute values and second inequality follows from Horn's inequalities \cite{horn}.  From the   above expressions and sandwich theorem, we have  that  $\lim_{n\to {\infty}} |[\bm{\lambda}_n]_{\{1,2..i\}}|^\frac{1}{n} = [\bm{\sigma}]_{\{1,2..i\}}$ for all $i\leq r$. For $i> r$, $\lim_{n\to {\infty}} |[\bm{\lambda}_n]_{\{1,2..i\}}|^\frac{1}{n} = [\bm{\sigma}]_{\{1,2..i\}}= 0$, follows directly from Horn's inequalities. Therefore,
$\lim_{n\to {\infty}} |[\bm{\lambda}_n]_{\{1,2..i\}}|^\frac{1}{n} = [\bm{\sigma}]_{\{1,2..i\}}$ for all $1\leq i \leq d$. It implies that both $|\bm{\sigma}_n|^\frac{1}{n}$ and $|\bm{\lambda}_n|^\frac{1}{n}$  converge  to the same diagonal matrix $\bm{\sigma}$ almost surely.
\end{proof}

\begin{remark}\label{remark1}
We can observe from  the proof of above Theorem that, for a sequence of product random matrices  $\{\mathcal{P}_n= \mathcal{U}_n\bm{\sigma}_n\mathcal{V}_n\}_{n=1}^{\infty}$ which satisfy remark \ref{remark}, existence and distinctness (of non-zero ones)  of Lyapunov exponents implies existence of stability exponents and also their equality with Lyapunov exponents.  As remark \ref{remark7} implies that Lyapunov exponents of right isotropic random matrices are equal to that of  isotropic random matrices with the same singular value distribution, the above Theorem holds for right isotropic random matrices also.
\end{remark}

Now we proceed to prove the lemma used in the above proof.

\begin{lemma}\label{lemma}
 Let $\{{W}_n\}_{n=1}^{\infty}$ be sequence of independent Haar unitary matrices (or orthogonal matrices) of order $d$. Then $|[W_n]_J|^\frac{1}{n}\to {1}$  and $|[W_n]_J|^\frac{1}{\sqrt{n}}\to {1}$ as  $n\to {\infty}$ for every $J\subseteq \{1,2,...d\}$  almost surely.  
\end{lemma}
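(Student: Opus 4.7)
The plan is to reduce both convergence statements to a Borel--Cantelli argument, which in turn rests on a single moment bound for $-\log|[W]_J|$ when $W$ is a single Haar unitary (or orthogonal) matrix. Since $\{W_n\}_{n\ge 1}$ are i.i.d., the variables $X_n := -\log|[W_n]_J|$ are also i.i.d., and they are nonnegative because any principal submatrix of a unitary matrix has spectral norm at most $1$, so $|[W_n]_J|\le 1$. For every $\alpha > 0$,
\[
\sum_{n=1}^{\infty} \mathbb{P}(X_n > \varepsilon n^{\alpha}) \;=\; \sum_{n=1}^{\infty} \mathbb{P}\!\left(X_1^{1/\alpha} > \varepsilon^{1/\alpha} n\right) \;\le\; \varepsilon^{-1/\alpha}\,\mathbb{E}\!\left[X_1^{1/\alpha}\right],
\]
so by Borel--Cantelli $\mathbb{E}[X_1^{1/\alpha}] < \infty$ implies $X_n/n^{\alpha}\to 0$ almost surely. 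Taking $\alpha=1$ will give $|[W_n]_J|^{1/n}\to 1$ (assuming $\mathbb{E}[X_1]<\infty$), and $\alpha = 1/2$ will give $|[W_n]_J|^{1/\sqrt{n}}\to 1$ (assuming $\mathbb{E}[X_1^2]<\infty$).

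It then remains to show that for a single Haar unitary $W$ of order $d$ and any $J\subseteq\{1,\dots,d\}$, all polynomial moments of $-\log|[W]_J|$ are finite. By Haar invariance under conjugation by permutation matrices, we may take $J=\{1,\dots,k\}$ and let $B$ be the top-left $k\times k$ block of $W$; the cases $k=0$ and $k=d$ are immediate. For $1\le k\le d-1$, the joint density of the squared singular values $\lambda_1,\dots,\lambda_k$ of $B$ is the truncated unitary ensemble, known explicitly: in the complex case with $d\ge 2k$ it is proportional to
\[
\prod_{1\le i<j\le k}(\lambda_i-\lambda_j)^2 \prod_{i=1}^{k} (1-\lambda_i)^{d-2k}
\]
on $[0,1]^k$, with analogous Jacobi-type densities in the complementary ranges and in the orthogonal case. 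Since $|\det B|^2 = \prod_i \lambda_i$, the quantity $\mathbb{E}\bigl[|\det B|^{-2s}\bigr]$ is a Selberg-type integral which converges for all sufficiently small $s>0$; equivalently, $X_1$ has a finite exponential moment, so in particular $\mathbb{E}[X_1^p]<\infty$ for every $p\ge 1$.

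The main obstacle is precisely this moment estimate: controlling how often the minor $|[W]_J|$ is very close to $0$. The key feature making it work is the Vandermonde-type repulsion among the $\lambda_i$, which forces the marginal density of the smallest squared singular value to decay rather than blow up near $0$. In the range $k > d/2$, where unitarity forces some singular values of $B$ to equal $1$, one reduces to the remaining $d-k$ non-trivial squared singular values, which again obey a Jacobi-type joint density; in the real case the density carries an extra factor $\prod_i \lambda_i^{-1/2}$, but the resulting power $-s-\tfrac{1}{2}$ remains $> -1$ for small $s>0$. In all of these cases the Selberg integral converges and the argument closes.
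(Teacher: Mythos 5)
Your reduction is sound and its first half coincides with the paper's: both arguments note $|[W_n]_J|\le 1$, reduce by permutation invariance to a leading principal minor, and use the Borel--Cantelli/first-moment criterion to convert $|[W_n]_J|^{1/n}\to 1$ and $|[W_n]_J|^{1/\sqrt n}\to 1$ into finiteness of the first and second moments of $-\log|[W_1]_J|$. Where you diverge is in how that moment bound is obtained. You invoke the explicit Jacobi-type joint density of the squared singular values of the truncated block and bound $\mathbb{E}\bigl[|\det B|^{-2s}\bigr]$ by a convergent Selberg integral for small $s>0$; this is exactly the route the paper acknowledges as available (``using density of eigenvalues of truncated unitary (or orthogonal) matrices'') but deliberately sidesteps. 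The paper instead factorizes an $i\times d$ Ginibre matrix as $S=TO$ (Gram--Schmidt on rows), so that the $i\times i$ truncation $O_1$ of a Haar orthogonal matrix satisfies $\log|\det O_1|=\log|\det S_1|-\log|\det T|$, a difference of sums of independent $\log\chi^2$ variables whose first and second moments are elementary. Your approach buys a stronger conclusion --- a finite exponential moment of $-\log|[W_1]_J|$, hence all polynomial moments --- at the cost of citing the truncated-ensemble densities in all the parameter ranges (complex/real, $k\le d/2$ and $k>d/2$), which you treat somewhat summarily but which are standard. The paper's approach is more self-contained, avoids any Selberg-integral computation, and yields the Ginibre $=$ (triangular)$\times$(truncation) decomposition as a byproduct it advertises in the abstract. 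Both proofs are valid.
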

\begin{proof}
As there are only finitely many $J\subseteq \{1,2,...d\}$, it is enough to prove for 
every particular $J\subseteq \{1,2,...d\}$ that $|[W_n]_J|^\frac{1}{n}\to {1}$  and $|[W_n]_J|^\frac{1}{\sqrt{n}}\to {1}$ as  $n\to {\infty}$   almost surely. For any particular $J$ ($|J|=i$), since  $[W_n]_J$ is  determinant of sub-block of Haar unitary matrix (or orthogonal matrix), using invariance of measure of $W_n$ under permutations of rows and columns, we can assume without of loss of generality that $J=\{1,2,..,i\}$. \

By Borel-cantelli lemma, we know that for a sequence of $i.i.d$ random variables $\{Y_n\}_{n=1}^{\infty}$, $\frac{Y_n}{n} \to 0$ almost surely if and only if $\mathbb{E}{|Y_1|}$ is finite. Therefore $\frac{\log|[W_n]_J|}{n}\to 0$ and $\frac{\log|[W_n]_J|}{\sqrt{n}}\to 0$ almost surely as $n\to \infty$ if and only if $\mathbb{E}|\log{|[W_1]_J|}|$ and $\mathbb{E}|\log{|[W_1]_J|}|^2$  are finite. Notice that $\mathbb{E}|\log{|[W_1]_J|}|=-\mathbb{E}\log{|[W_1]_J|}$. All we need to show now is that logarithm of absolute value of determinant of a truncated Haar unitary (or orthogonal) matrix has finite expected value.  This can be done by using density of eigenvalues of truncated unitary (or orthogonal) matrices, see \cite{ortho}, \cite{unit}. 

We propose another way of doing this which also sheds some new light on many other aspects of truncated Haar Unitary or orthogonal matrices. we do it for orthogonal case. And unitary case would be  a straightforward generalization of that. The idea is to write Ginibre matrix as product of a random lower triangular matrix and an independent truncated Haar orthogonal matrix. 

Let $S$ be $i \times d$ real Ginibre matrix $i.e$ probability density of $S$ is proportional to $e^{-\frac{1}{2}\tr({S^*S})}dS$, where $dS$ denotes Lebesgue measure on $S$. By QR decomposition of $S$ $i.e$ Gram-Schmidt orthogonalization of rows from top to bottom, we get $S=TO$ where T is lower triangular matrix with non-negative diagonal entries and $O$ is $i \times d$ matrix with orthonormal rows. Lebesgue measure on $S$  can be written in terms of new variables as $dS={\prod_{j=1}^{i}{T_{jj}}^{d-j}} dT d\mathcal H ({O})$, where $dT$ denotes Lebesgue measure on non-zero entries of $T$ and $d\mathcal H ({O})$ denotes probability measure of first $i$ rows of a $d \times d$ Haar orthogonal matrix (see \cite{muir} , p. 63). For a geometric derivation of  Jacobian of QR decomposition, we refer reader to \cite{keiburg} and also to appendix of \cite{adhikari2016} (though there the argument is for complex case, it easily carries over to real case).  The joint probability density of $T, O$ is ${(\prod_{j=1}^{i}{T_{jj}}^{d-j})}e^{-\frac{1}{2}\tr({T^*T})}dT d\mathcal H ({O})$, upto the normalizing constant. $T$ and $O$ are independent. Let $S= [S_1 \,  S_2]$ and $O=[O_1 \, O_2]$ with  $S_1,O_1$ being square matrices. Then $S_1=TO_1$, where $S_1$ is real Ginibre matrix,  $O_1$ is $i \times i$ subblock of a $d \times d$  Haar orthogonal matrix independent of $T$. All  Non-zero  entries of $T$ are independent random variables with non-diagonal ones distributed as  standard normal random variables and diagonal elements $T_{j,j}$  distributed  as square-root  of  $\chi^{2}_{(d-j+1)}$ Chi-square random variable for $j=1,2\ldots,i$. Notice that if $d=i$ we have the usual QR decomposition of $S_1$. Therefore 
\begin{eqnarray*}
\mathbb{E}\log{|[W_1]_J|} &=& \mathbb{E}\log{|\det(O_1)|} \\
&=& \mathbb{E}\log{|\det(S_1)|}-\mathbb{E}\log{|\det(T)|}\\
&=&  \frac{1}{2}\sum_{j=1}^{i}  \mathbb{E}\log{\chi^{2}_{(i-j+1)}} - \frac{1}{2}\sum_{j=1}^{i}  \mathbb{E}\log{\chi^{2}_{(d-j+1)}}
\end{eqnarray*}

and similarly 
\begin{eqnarray*}
\mathbb{E}(\log{|[W_1]_J|})^2 &=& \mathbb{E}(\log{|\det(O_1)|})^2 \\
&=& \mathbb{E}(\log{|\det(S_1)|}-\log{|\det(T)|})^2\\
&\leq & 2 \mathbb{E}(\log{|\det(S_1)|})^2+ 2 \mathbb{E}(\log{|\det(T)|})^2\\
&=& 2 \mathbb{E}( \frac{1}{2}\sum_{j=1}^{i}  \log{\chi^{2}_{(i-j+1)}} )^2+ 2 \mathbb{E}(\frac{1}{2}\sum_{j=1}^{i}  \log{\chi^{2}_{(d-j+1)}})^2\\
&\leq &  \frac{i}{2}\sum_{j=1}^{i}  \mathbb{E}(\log{\chi^{2}_{(i-j+1)}})^2 + \frac{i}{2}\sum_{j=1}^{i}  \mathbb{E}(\log{\chi^{2}_{(d-j+1)}})^2
\end{eqnarray*}

Since $\mathbb{E}\log{\chi^{2}_{(n)}}$ and $\mathbb{E}(\log{\chi^{2}_{(n)}})^2$ are finite for all $n=1,2\ldots$, we have that  $\mathbb{E}|\log{|[W_1]_J|}|$ and $\mathbb{E}|\log{|[W_1]_J|}|^2$ are finite, which proves the lemma.
\end{proof}   

\begin{remark}
In the above lemma, $M_1=LO_1$ where, since $\frac{\chi^2_{(n)}}{n} \to 1$ in distribution as $n\to \infty$,  $\frac{L}{\sqrt{d}} \to I$ in distribution as $d \to \infty$. So, $\sqrt{d}O_1 \to M_1$ in distribution as $d \to \infty$.
\end{remark}

\section{Fluctuations of singular values and eigenvalues}\label{fluc}

Let $\bm{\lambda}_n=diag(\lambda_{n,1},\lambda_{n,2},..\lambda_{n,d})$ and $\bm{\sigma}_n=diag(\sigma_{n,1},\sigma_{n,2},..\sigma_{n,d})$  be as defined earlier  for all $n=1,2...$ . Assume   that $\mathbb{E}(\log^{+}\|M_1\|)$ and $\mathbb{E} (\log|\det(M_1)|)$ are finite, so that Theorem \ref{theorem} holds. Since $M_1=\mathcal{P}_1=\mathcal{U}_1\bm{\sigma}_1\mathcal{V}_1$, the above assumptions imply that $\mathbb{E}(\log^{+}\sigma_{1,1})$ and $\mathbb{E} (\log\prod_{i=1}^d \sigma_{1,i})$ are finite. Observe that $-\infty<\frac{1}{d}\mathbb{E}(\log\prod_{i=1}^d {\sigma_{1,i}}) \leq \mathbb{E}(\log \sigma_{1,1}) \leq \mathbb{E}(\log^{+}\sigma_{1,1})<\infty$ and $-\infty<\mathbb{E}(\log(\prod_{i=1}^d {\sigma_{1,i}}))- (d-1)\mathbb{E}(\log \sigma_{1,1}) \leq \mathbb{E}(\log\sigma_{1,d}) \leq \mathbb{E}{\log\sigma_{1,1}} < \infty$, which means that all the singular values of $M_1$ have finite log-moment. Theorem \ref{theorem} gives us $\bm{\sigma}$ to have   all diagonal entries positive and distinct. \\
 
    Using recursive structure of singular values $\bm{\sigma}_n$ and the fact of almost sure convergence of $\bm{\sigma}_n^{\frac{1}{n}}$  to $\bm{\sigma}$, we can approximate ${\log(\bm{\sigma}_n)}$ by a sum of $n$ $i.i.d$ random variables, to which we can apply central limit theorem to derive the Gaussian nature of first order fluctuations of $\frac{\log(\bm{\sigma}_n)}{n}$. First of all, notice that  
\begin{eqnarray*}
\det(zI-\bm{\sigma}_{n}^2) = \det(zI-\mathcal{P}_{n}\mathcal{P}_{n}^{*}) &=& \det(zI-\mathcal{P}_{n-1}M_{n}M_{n}^* \mathcal{P}_{n-1}^{*}) \\ &=& \det(zI-\bm{\sigma}_{n-1}\mathcal{V}_{n-1}M_{n}M_{n}^*\mathcal{V}_{n-1}^{*}\bm{\sigma}_{n-1})
\end{eqnarray*}

By comparing coefficients of $z^{d-i} \hspace{2mm}$  for $i\leq r$ in the above equation, we get 
$$ \sum_{|J|=i} [\bm{\sigma}_{n}^2]_J= \sum_{|J|=i}[\bm{\sigma}_{n-1}\mathcal{V}_{n-1}M_{n}M_{n}^*\mathcal{V}_{n-1}^{*}\bm{\sigma}_{n-1}]_J=\sum_{|J|=i}[\mathcal{V}_{n-1}M_{n}M_{n}^*\mathcal{V}_{n-1}^{*}]_J[\bm{\sigma}_{n-1}^2]_J.$$

$\mathcal{V}_n$ is Haar unitary matrix independent of $\mathcal{V}_1, \mathcal{V}_2...\mathcal{V}_{n-1}$ and $M_1, M_2...M_{n-1}$. So, $\mathcal{V}_n$ is independent of  $\mathcal{P}_{n-1}$. Even though $\mathcal{V}_n$ is not independent of  $\mathcal{P}_{n}$, but being the matrix of right singular vectors of a isotropic matrix $\mathcal{P}_{n}$, it is  independent of $\mathcal{P}_n\mathcal{P}_n^*$. So, $\mathcal{V}_n$  is independent of $M_nM_n^*={\mathcal{P}_{n-1}}^{-1}\mathcal{P}_n\mathcal{P}_n^*({\mathcal{P}_{n-1}}^{-1})^{*}$ and subsequently $\mathcal{V}_{n-1}M_{n}M_{n}^*\mathcal{V}_{n-1}^{*}$ . This means that $\mathcal{V}_{n}M_{n+1}M_{n+1}^*\mathcal{V}_{n}^{*}$ is independent of $\mathcal{V}_{n-1}M_{n}M_{n}^*\mathcal{V}_{n-1}^{*}$ and the preceding matrices in the sequence.  So, $\{ \mathcal{V}_{n-1}M_{n}M_{n}^*\mathcal{V}_{n-1}^{*} \}_{n=1}^{\infty}$ for $n=1,2...$ (take $\mathcal{V}_0$ to be a Haar unitary matrix independent of all other matrices involved) forms a sequence of $i.i.d$ isotropic random matrices with distribution same as that of $\mathcal{V}_{0}M_{1}M_{1}^*\mathcal{V}_{0}^{*}$ or $\mathcal{V}_{0}D_{1}^2\mathcal{V}_{0}^{*}$ or $\mathcal{V}_{0}\bm{\sigma}_{1}^2\mathcal{V}_{0}^{*}$. For the sake of simplicity of notation, let us write $\mathcal{V}_{n-1}M_{n}M_{n}^*\mathcal{V}_{n-1}^{*}$ as $X_n$ for all $n$.

$$ \sum_{|J|=i} [\bm{\sigma}_{n}^2]_J= \sum_{|J|=i}[X_{n}]_J[\bm{\sigma}_{n-1}^2]_J.$$
 This equation gives us the required recursive structure,
$$\sum_{|J|=i} [\bm{\sigma}_{n}^2]_J= \prod_{k=1}^{n} \frac{\sum_{|J|=i}[X_{k}]_J[\bm{\sigma}_{k-1}^2]_J}{\sum_{|J|=i} [\bm{\sigma}_{k-1}^2]_J}.$$   

Notice that $k$-th term in the product is convex combination of $i$-th order principal minors of matrix $X_k$. Because of the almost sure convergence of $|\bm{\sigma}_n|^\frac{1}{n}$ to $\bm{\sigma}$ (a diagonal matrix with distinct diagonal entries), $k$-th term in the product is approximately $[X_k]_{\{1,2..i\}}$ for all large values of $k$, almost surely. Using the above equalities, positivity of principal minors of hermitian matrices and the inequality $\log(a+x)\leq \log(a)+\frac{x}{a}$ for $x,a \geq 0$, we can get lower and upper bounds for logarithm of singular values $\bm{\sigma}_n$ with both bounds being very close to same  sums of $i.i.d$ random variables for large values of $n$. We get, for all $n$
\begin{equation}\label{inequal}
\sum_{k=1}^n \log([X_{k}]_{\{1,2..i\}}) + E_{n,i}   \leq  \log(\sum_{|J|=i} [\bm{\sigma}_{n}^2]_J) 
 \leq \sum_{k=1}^n \log([X_{k}]_{\{1,2..i\}}) + E_{n,i}+F_{n,i}
 \end{equation}

 where 
\[E_{n,i}=  \log\left( \prod_{k=1}^n  \frac{[\bm{\sigma}_{k-1}^2]_{\{1,2..i\}}}{\sum_{|J|=i} [\bm{\sigma}_{k-1}^2]_J}\right), F_{n,i}= \sum_{k=1}^n   \sum_{ J\neq {\{1,2..i\}} }^{|J|=i} \frac{[X_{k}]_J[\bm{\sigma}_{k-1}^2]_J}{[X_{k}]_{\{1,2..i\}}[\bm{\sigma}_{k-1}^2]_{\{1,2..i\}}}.\]

By  using  the inequality,
$[\bm{\sigma}_n^2]_{\{1,2..i\}}   \leqslant  \sum_{|J|=i} [\bm{\sigma}_n^2]_J  \leqslant {d \choose i}[\bm{\sigma}_n^2]_{\{1,2..i\}},$ and using the  inequalities \ref{inequal} of singular values, we get that for all $i=1,2..d$
and $n=1,2,..$
\begin{equation}\label{inequal2}
\begin{array}{r@{}l}
\log(\sigma_{n,i}^2) \geq \sum_{k=1}^n \log(\frac{[X_{k}]_{\{1,2..i\}}}{[X_{k}]_{\{1,2..i-1\}}}) + E_{n,i} - E_{n,i-1}- F_{n,i-1}-\log{{d \choose i}}\\  
\log(\sigma_{n,i}^2) 
 \leq \sum_{k=1}^n \log(\frac{[X_{k}]_{\{1,2..i\}}}{[X_{k}]_{\{1,2..i-1\}}}) + E_{n,i}-E_{n,i-1}+F_{n,i}+ \log{{d \choose {i-1}}},
\end{array}
\end{equation}
(say $E_{n,0}=F_{n,0}=0$ and $[X_n]_\emptyset=1$ for all $n=1,2,..$).
\\

The similar bounds can be obtained for  moduli of eigenvalues also. We obtain them  by showing closeness between moduli of eigenvalues and singular values. From Horn's inequalities \cite{horn}, we have  $|[\bm{\lambda}_n]_{\{1,2..i\}}|\leqslant [\bm{\sigma}_n]_{\{1,2..i\}}$ for $i=1,2..d$. Using the equation \ref{idea}, we can get the following inequalities  for $i=1,2,..d$ and $n=1,2,..,$,
 \begin{equation*} 
   \log (|\sum_{|J|=i}[W_n]_J[\bm{\sigma}_n]_J|) - \log{\ {d \choose i}} \leqslant \log(|[\bm{\lambda}_n]_{\{1,2..i\}}|) \leqslant \log([\bm{\sigma}_n]_{\{1,2..i\}}). 
\end{equation*}
  Writing the difference between upper bound and lower bound as $H_{n,i}$ and using the above inequalities, we get 
 \begin{equation} \label{idea2}
   \log (\sigma_{n,i})-H_{n,i} \leqslant \log(|\lambda_{n,i}|) \leqslant \log (\sigma_{n,i})+H_{n,i-1}. 
\end{equation}
where $H_{n,0}=0$, $H_{n,i}=\log{\ {d \choose i}}-\log(|[W_n]_{\{1,2..i\}}|)-\log(|\frac{\sum_{|J|=i}[W_n]_J[\bm{\sigma}_n]_J}{[W_n]_{\{1,2..i\}}[\bm{\sigma}_n]_{\{1,2..i\}}}|)$,  for all $i=1,2..d$, $n=1,2,..,$. 
\\

Now we shall see the limiting  behavior of $E_{n,i}$, $F_{n,i}$ and $H_{n,i}$. Since $|\bm{\sigma}_n|^\frac{1}{n}$ converges almost surely to $\bm{\sigma}$, a diagonal matrix with distinct positive diagonal entries, as $n \to \infty$, it follows from basic calculus that almost surely $ \frac{[\bm{\sigma}_{n}^2]_{\{1,2..i\}}}{\sum_{|J|=i} [\bm{\sigma}_{n}^2]_J}$ converges to one with error term going down to zero exponentially fast and so  $E_{n,i}$  converges almost surely to a finite random variable $E_i$ as $n\to \infty$, for $i=1,2,..d$. 
\\

If $\limsup_{n\to \infty} [\frac{[X_{n}]_J[\bm{\sigma}_{n-1}^2]_J}{[X_{n}]_{\{1,2..i\}}[\bm{\sigma}_{n-1}^2]_{\{1,2..i\}}}]^{\frac{1}{n}} < 1$ almost surely, then ,by the root test for convergence of a series, $\sum_{k=1}^n    \frac{[X_{k}]_J[\bm{\sigma}_{k-1}^2]_J}{[X_{k}]_{\{1,2..i\}}[\bm{\sigma}_{k-1}^2]_{\{1,2..i\}}} $ converges almost surely as $n\to \infty$. We already know that $\lim_{n\to\infty} [\frac{[\bm{\sigma}_{n}^2]_J}{[\bm{\sigma}_{n}^2]_{\{1,2..i\}}}]^{\frac{1}{n}} =\frac{[\bm{\sigma}^2]_J}{[\bm{\sigma}^2]_{\{1,2..i\}}}<1$ for $J \neq {\{1,2..i\}}$. Since $\{X_n\}_{n=1}^{\infty}$ is a sequence of $i.i.d$ isotropic random matrices and  $[X_{n}]_J$ has same distribution for all $J$ such that $|J|=i$, we have by triangle inequality that   $\mathbb{E}|\log(\frac{[X_{1}]_J}{[X_{1}]_{\{1,2..i\}}})| \leq 2\mathbb{E}|\log([X_1]_{\{1,2..i\}})|\leq 2\mathbb{E}|\log([\bm{\sigma}_1]_{\{1,2..i\}})|<\infty$ (because all the singular values of $M_1$(or $\bm{\sigma}_1$) have finite log-moments). Therefore by Borel-Cantelli lemma $\frac{1}{n} \log(\frac{[X_{n}]_J}{[X_{n}]_{\{1,2..i\}}})\to 0 $ or $[\frac{[X_{n}]_J}{[X_{n}]_{\{1,2..i\}}}]^{\frac{1}{n}} \to 1$ almost surely as $n\to \infty$. So, for $J \neq {\{1,2..i\}}$, $\limsup_{n\to \infty} [\frac{[X_{n}]_J[\bm{\sigma}_{n-1}^2]_J}{[X_{n}]_{\{1,2..i\}}[\bm{\sigma}_{n-1}^2]_{\{1,2..i\}}}]^{\frac{1}{n}}= \frac{[\bm{\sigma}^2]_J}{[\bm{\sigma}^2]_{\{1,2..i\}}} < 1$ which implies $\sum_{k=1}^n    \frac{[X_{k}]_J[\bm{\sigma}_{k-1}^2]_J}{[X_{k}]_{\{1,2..i\}}[\bm{\sigma}_{k-1}^2]_{\{1,2..i\}}} $ converges almost surely as $n\to \infty$. Therefore $F_{n,i}$ converges almost surely to a finite random variable $F_i$ as $n\to \infty$  for all $i=1,2..d$.\\

It follows from Lemma \ref{lemma} that $\frac{\log(|[W_n]_{\{1,2..i\}}|)}{\sqrt{n}} \to 0$ almost surely as $n \to \infty$.
Again from Lemma \ref{lemma} we get that   $\lim_{n\to \infty} |\frac{[W_{n}]_J[\bm{\sigma}_{n}]_J}{[W_{n}]_{\{1,2..i\}}[\bm{\sigma}_{n}]_{\{1,2..i\}}}|^{\frac{1}{n}}= \frac{[\bm{\sigma}]_J}{[\bm{\sigma}]_{\{1,2..i\}}} $. So,       for $J \neq {\{1,2..i\}}$,  $\lim_{n\to \infty} \frac{[W_{n}]_J[\bm{\sigma}_{n}]_J}{[W_{n}]_{\{1,2..i\}}[\bm{\sigma}_{n}]_{\{1,2..i\}}}=0$, which implies that $\log(|\frac{\sum_{|J|=i}[W_n]_J[\bm{\sigma}_n]_J}{[W_n]_{\{1,2..i\}}[\bm{\sigma}_n]_{\{1,2..i\}}}|) \to 0$ almost surely as $n \to \infty$.  Therefore $\frac{H_{n,i}}{\sqrt{n}} \to 0$  almost surely  as $n\to \infty$  for all $i=1,2,..d$.\\

 For simple notation, denote $\frac{1}{2}( E_{n,i} - E_{n,i-1}- F_{n,i-1}-\log{{d \choose i}})$, $\frac{1}{2}(E_{n,i}-E_{n,i-1}+F_{n,i}+ \log{{d \choose {i-1}}})$  by $\underline{\bm{\epsilon}}_{n,i}$ and $\bar{\bm{\epsilon}}_{n,i}$ respectively. $\underline{\bm{\epsilon}}_n$, $\bar{\bm{\epsilon}}_n$
 be the vectors  $(\underline{\bm{\epsilon}}_{n,1},\underline{\bm{\epsilon}}_{n,2},..\underline{\bm{\epsilon}}_{n,d})$, $(\bar{\bm{\epsilon}}_{n,1},\bar{\bm{\epsilon}}_{n,2},..\bar{\bm{\epsilon}}_{n,d})$ respectively. Denote the  vectors $(H_{n,0}, H_{n,1},..H_{n,d-1})$ and $(H_{n,1}, H_{n,2},..H_{n,d})$ by $\bar{H}_n$ and $\underline{H}_n$ respectively. Observe that $\lim_{n \to \infty}{\underline{\bm{\epsilon}}_n}$ and $\lim_{n \to \infty}{\bar{\bm{\epsilon}}_n}$ are   finite  and $\lim_{n \to \infty}\frac{{\bar{H}}_{n,i}}{\sqrt{n}}=\lim_{n \to \infty}\frac{{\underline{H}}_{n,i}}{\sqrt{n}}=0$ almost surely. \\
 
 Denote $\frac{[X_{n}]_{\{1,2..i\}}}{[X_{n}]_{\{1,2..i-1\}}}$ by $L_{n,i}^2$ and  $\mathcal{L}_n$ be the vector $(L_{n,1},L_{n,2},..L_{n,d})$.  We can see that  $\{\mathcal{L}_n\}_{n=1}^{\infty}$ is a sequence of $i.i.d$ random vectors, whose distribution is same as that of  $\mathcal{L}=(L_{1},L_{2},..L_{d}):=\l(\sqrt{[V^*D^2V]_{\{1\}}},\sqrt{\frac{[V^*D^2V]_{\{1,2\}}}{[V^*D^2V]_{\{1\}}}},..\sqrt{\frac{[V^*D^2V]_{\{1,2,..d\}}}{[V^*D^2V]_{\{1,2..d-1\}}}}\r)$, where $V$ is Haar unitary matrix independent of random diagonal matrix $D$ and $D$ is distributed like diagonal matrix $D_1$ of singular values of  $M_1$. With a slight abuse of notation, $\bm{\sigma}_n$ be the vector $(\sigma_{n,1},\sigma_{n,2},..\sigma_{n,d})$ and let $\bm{\lambda}_n$ be the vector $(\lambda_{n,1},\lambda_{n,2},..\lambda_{n,d})$ and  $|\bm{\lambda}_n|$ be $(|\lambda_{n,1}|,|\lambda_{n,2}|,..|\lambda_{n,d}|)$ for all $n=1,2..$.. \\

 The inequalities \ref{inequal2} and \ref{idea2} can be written  in  vector notation as \begin{eqnarray*}
\frac{\underline{\bm{\epsilon}}_n}{2\sqrt{n}}
&\leq & \sqrt{n}\l(\frac{\log(\bm{\sigma}_{n})}{n}-\mathbb{E}\log{\mathcal{L}}\r) - \sqrt{n}\l(\frac{\sum_{k=1}^n \log(\mathcal{L}_{k})}{n}-\mathbb{E}\log{\mathcal{L}}\r) \leq   \frac{\bar{\bm{\epsilon}}_n}{2\sqrt{n}}  \\
\frac{\underline{H}_n}{\sqrt{n}} 
&\leq  & \sqrt{n}\l(\frac{\log(\bm{\sigma}_{n})}{n}-\mathbb{E}\log{\mathcal{L}}\r) - \sqrt{n}\l(\frac{\log(
|\bm{\lambda}_{n}|)}{n}-\mathbb{E}\log{\mathcal{L}}\r) \leq  \frac{\bar{H}_n}{\sqrt{n}}.  
\end{eqnarray*}

Since $\lim_{n \to \infty}\frac{{\bar{H}}_{n,i}}{\sqrt{n}}=\lim_{n \to \infty}\frac{{\underline{H}}_{n,i}}{\sqrt{n}}=0$ and  $\lim_{n \to \infty}{\underline{\bm{\epsilon}}_n}$, $\lim_{n \to \infty}{\bar{\bm{\epsilon}}_n}$ are   finite   almost surely, $\sqrt{n}\l(\frac{\log(\bm{\sigma}_{n})}{n}-\mathbb{E}\log{\mathcal{L}}\r) - \sqrt{n}\l(\frac{\sum_{k=1}^n \log(\mathcal{L}_{k})}{n}-\mathbb{E}\log{\mathcal{L}}\r) \to 0$ and $\sqrt{n}\l(\frac{\log(
|\bm{\lambda}_{n}|)}{n}-\mathbb{E}\log{\mathcal{L}}\r) - \sqrt{n}\l(\frac{\sum_{k=1}^n \log(\mathcal{L}_{k})}{n}-\mathbb{E}\log{\mathcal{L}}\r) \to 0$ almost surely as $n \to \infty$. By multivariate central limit theorem for sum of $i.i.d$ random vectors, $\sqrt{n}\l(\frac{\sum_{k=1}^n \log(\mathcal{L}_{k})}{n}-\mathbb{E}\log{\mathcal{L}}\r)$ converges in distribution to a  Gaussian random vector, whose co-variance matrix is same as that of $\log{\mathcal{L}}$.\\

To understand the random vector $\mathcal{L}$, observe that by applying Gram-Schmidt orthogonalization process to the columns of $DV$(=$M$) from left to right, we can write $DV$ as product of a unitary matrix $Q$ and a upper triangular matrix $R$ with non-negative diagonal entries and   $L_1,L_2,..L_d$ are going to be the diagonal entries(from left top to right bottom) of the upper triangular matrix $R$. Geometrically speaking, $L_1$ is  length of  the first column vector of $DV$, $L_2$ is length of the projection of second column vector onto the orthogonal complement of the space generated by the first column vector and similarly $L_i$ is length of the projection of $i$-th column vector onto the orthogonal complement of the subspace generated by the first $i-1$ column vectors of $DV$(or $M$). Since $D$ is almost surely non-singular, $L_1,L_2,..L_d$ are positive almost surely.

 For $j>i$, let $M_{j\perp(i)}$ denote the projection of   $j$-th column of $M$($=DV$) onto the orthogonal complement of subspace generated by first $i$ columns of $M$. Because of right rotation invariance of distribution of $M$, the marginal probability distribution of $M_{k\perp(i)}$  is same as that of $M_{j\perp(i)}$, for any $k>i$. From $QR$ decomposition of $M$, we can see that the length of $M_{i \perp(i-1)}$ is   $L_{i}$ and the length  of $M_{i+1\perp(i-1)}$  is $\sqrt{L_{(i+1)}^2+|R_{i,i+1}|^2}$. So, $L_{i}$  and $\sqrt{L_{(i+1)}^2+|R_{i,i+1}|^2}$ have the same probability distribution .  Therefore, $L_i$'s are stochastically decreasing in order $i.e $  $ \P(L_1>t)\geq \P(L_2>t)...\geq \P(L_d>t)$ for any real $t$. 
 
 Again by right rotation invariance, since    $R_{i,j}$ (for   $j>i$) is inner product of $j$-th column of $DV$ with $i$-th column of $Q$, all $R_{i,j}$ for $j=i+1,i+2,..d$ have the same conditional distribution given first $i$ columns of $DV$, so same marginal distributions also. $R_{1,2}$  is inner product of second column of $DV$ and unit vector along first column of $DV$, so it is  non-zero with positive probability unless $D$ is random scalar multiple of identity matrix. For $k>j\geq i$, the inner product of $M_{j\perp(i-1)}$ and $M_{k\perp(i-1)}$ is $\sum_{t=i}^{j-1}R_{t,j}R_{t,k}+ L_jR_{j,k}$.  For a fixed $i$, the innerproduct  of $M_{k\perp(i-1)}$ with unit vector along $M_{j\perp(i-1)}$ has same distribution for any  $k>j\geq i$. By taking  $j=i$, $k=i+1$ first  and then $j=i+1,k=i+2$, we get that $R_{i,i+1}$ and $\frac{R_{i,i+1}R_{i,i+2}+L_{i+1}R_{i+1,i+2}}{\sqrt{L_{(i+1)}^2+|R_{i,i+1}|^2}}$  have same distribution.  If $R_{i,i+1}$ is non-zero with positive probability, then $R_{i+1,i+2}$ is also non-zero with positive probability. Otherwise $R_{i+1,i+2}=0$ almost surely which implies that $R_{i,i+1}$ and $\frac{R_{i,i+1}R_{i,i+2}}{\sqrt{L_{(i+1)}^2+|R_{i,i+1}|^2}}$ have same distribution. But we already know that $R_{i,i+1}$ and $R_{i,i+2}$ have same distribution. So, it implies that $R_{i,i+2}$ and $\frac{R_{i,i+1}R_{i,i+2}}{\sqrt{L_{(i+1)}^2+|R_{i,i+1}|^2}}$ have same distribution, which leads to contradiction, because $\frac{R_{i,i+1}R_{i,i+2}}{\sqrt{L_{(i+1)}^2+|R_{i,i+1}|^2}}$ is strictly stochastically less than $R_{i,i+2}$ as  $L_{(i+1)}$ is positive almost surely. Therefore, if $D$ is not random scalar multiple of identity matrix, by induction $R_{i,i+1}$ is non-zero with positive probability for all $i=1,2,..d-1$. This implies that  $L_i$'s are stochastically strictly decreasing in order $i.e $  $ \P(L_1>t)\geq \P(L_2>t)...\geq \P(L_d>t)$ for any real $t$ and set of $t$'s where  strict inequalities hold between any two terms is of  non-zero measure. As logarithm is a increasing function,  this ensures distinctness and decreasing order  of Lyapunov exponents $i.e$ $\mathbb{E}\log{L_1}>\mathbb{E}\log{L_2}...>\mathbb{E}\log{L_d}$. We summarize the so far of this section  into a theorem as follows.

\begin{theorem}
Let $M_1,M_2...$ be sequence of $i.i.d$ right isotropic random matrices of order $d$, such that  all singular values of $M_1$ have finite log-moments. $M_1=QR$ be $QR$ decomposition of $M_1$ $i.e$ $Q$ is unitary matrix and $R$ is upper triangular matrix with non-negative diagonal entries $L_1,L_2,..L_d$. Let $\mathcal{L}$ be the vector $(L_{1},L_{2},..L_{d})$ and $\bm{\sigma}_{n}$, $\bm{\lambda}_{n}$ be vectors of singular values and eigenvalues (in decreasing order of their absolute values) of $M_1M_2..M_n$, respectively for all $n=1,2..$.  Then 
both 
$\sqrt{n}\l(\frac{\log(\bm{\sigma}_{n})}{n}-\mathbb{E}\log{\mathcal{L}}\r)$  and $\sqrt{n}\l(\frac{\log(
|\bm{\lambda}_{n}|)}{n}-\mathbb{E}\log{\mathcal{L}}\r)$ converge in distribution to zero-mean Gaussian random vector whose covariance matrix is the same as that of $\log{\mathcal{L}}$.

\end{theorem}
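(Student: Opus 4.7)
The plan is to exploit the recursive relation
\[
\sum_{|J|=i}[\bm{\sigma}_{n}^2]_J \;=\; \sum_{|J|=i}[X_{n}]_J[\bm{\sigma}_{n-1}^2]_J
\]
(obtained by reading off the coefficient of $z^{d-i}$ in $\det(zI-\bm{\sigma}_n^2)$) to telescope the sum of $i$-th order principal minors, and then extract $\log\sigma_{n,i}^2$ by dividing consecutive sums. First I would verify independence and identical distribution of the sequence $X_n=\mathcal{V}_{n-1}M_nM_n^*\mathcal{V}_{n-1}^{*}$: the right rotation invariance of $M_n$ makes $\mathcal{V}_{n-1}$ independent of $M_nM_n^*$, and together with Remark \ref{remark} this gives the i.i.d.\ isotropic structure of $\{X_n\}$.

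Next I would use the elementary inequality $\log(a+x)\le\log(a)+x/a$ (with $a=[X_k]_{\{1,\ldots,i\}}[\bm{\sigma}_{k-1}^2]_{\{1,\ldots,i\}}$ being the dominant summand) to sandwich $\log\!\bigl(\sum_{|J|=i}[\bm{\sigma}_n^2]_J\bigr)$ between
\[
\sum_{k=1}^n\log[X_k]_{\{1,\ldots,i\}} + E_{n,i}
\quad\text{and}\quad
\sum_{k=1}^n\log[X_k]_{\{1,\ldots,i\}} + E_{n,i}+F_{n,i},
\]
and then pass to $\log\sigma_{n,i}^2$ via $[\bm{\sigma}_n^2]_{\{1,\ldots,i\}}\le\sum_{|J|=i}[\bm{\sigma}_n^2]_J\le\binom{d}{i}[\bm{\sigma}_n^2]_{\{1,\ldots,i\}}$. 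This leads to $L_{n,i}^2=[X_n]_{\{1,\ldots,i\}}/[X_n]_{\{1,\ldots,i-1\}}$ as the i.i.d.\ increment, and I would identify the distribution of $\mathcal{L}=(L_1,\ldots,L_d)$ with the diagonal of the $R$-factor in the QR decomposition of $DV$ (equivalently of $M_1$) via Gram--Schmidt, since the quotient of successive minors of $V^*D^2V$ is the squared length of the corresponding orthogonal projection.

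The eigenvalue side is bridged to the singular-value side using (\ref{idea}) together with Horn's inequality $|[\bm{\lambda}_n]_{\{1,\ldots,i\}}|\le[\bm{\sigma}_n]_{\{1,\ldots,i\}}$, which yields $\log\sigma_{n,i}-H_{n,i}\le\log|\lambda_{n,i}|\le\log\sigma_{n,i}+H_{n,i-1}$. The crucial point is that $H_{n,i}/\sqrt{n}\to 0$ almost surely: this follows from Lemma \ref{lemma} (giving $\log|[W_n]_J|=o(\sqrt{n})$ a.s.) combined with the already established exponential decay of $[\bm{\sigma}_n]_J/[\bm{\sigma}_n]_{\{1,\ldots,i\}}$ for $J\ne\{1,\ldots,i\}$. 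After dividing the sandwich inequalities by $\sqrt{n}$ and subtracting $\sqrt{n}\,\mathbb{E}\log\mathcal{L}$, the error terms vanish almost surely, so both $\sqrt{n}\bigl(\tfrac{1}{n}\log\bm{\sigma}_n-\mathbb{E}\log\mathcal{L}\bigr)$ and $\sqrt{n}\bigl(\tfrac{1}{n}\log|\bm{\lambda}_n|-\mathbb{E}\log\mathcal{L}\bigr)$ have the same limit in distribution as $\sqrt{n}\bigl(\tfrac{1}{n}\sum_{k=1}^n\log\mathcal{L}_k-\mathbb{E}\log\mathcal{L}\bigr)$, and the multivariate central limit theorem for i.i.d.\ vectors finishes the proof (finiteness of second moments of $\log L_i$ follows from $L_1\ge\ldots\ge L_d$ stochastically together with the finite log-moments of the singular values of $M_1$).

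The main obstacle I anticipate is controlling $F_{n,i}=\sum_{k=1}^n\sum_{J\ne\{1,\ldots,i\}}\frac{[X_k]_J[\bm{\sigma}_{k-1}^2]_J}{[X_k]_{\{1,\ldots,i\}}[\bm{\sigma}_{k-1}^2]_{\{1,\ldots,i\}}}$ almost surely: its convergence is essential because otherwise it would not be $o(\sqrt{n})$. I would handle it by the root test, using that $[\bm{\sigma}_{k-1}^2]_J/[\bm{\sigma}_{k-1}^2]_{\{1,\ldots,i\}}$ decays exponentially (distinctness of Lyapunov exponents from Theorem \ref{theorem}) while $([X_k]_J/[X_k]_{\{1,\ldots,i\}})^{1/k}\to 1$ a.s.\ by Borel--Cantelli applied to the i.i.d.\ sequence $\log([X_k]_J/[X_k]_{\{1,\ldots,i\}})$, whose integrability is inherited from the finite log-moments of singular values. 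The similar but easier analysis for $E_{n,i}$ (purely in terms of $\bm{\sigma}_k$) falls out from the same exponential decay, and the distinctness of the limiting exponents $\mathbb{E}\log L_1>\cdots>\mathbb{E}\log L_d$ is established by the stochastic ordering argument comparing lengths of successive orthogonal projections, ruling out equality unless $D$ is a random scalar multiple of the identity.
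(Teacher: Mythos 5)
Your proposal reproduces the paper's own argument essentially step for step: the recursive identity on $i$-th principal minors via $X_n=\mathcal{V}_{n-1}M_nM_n^*\mathcal{V}_{n-1}^*$, the sandwich using $\log(a+x)\le\log a+x/a$ with error terms $E_{n,i},F_{n,i}$ handled by the root test and exponential separation of Lyapunov exponents, the bridge to eigenvalues via Horn's inequality and $H_{n,i}=o(\sqrt{n})$ from Lemma \ref{lemma}, and the multivariate CLT applied to the i.i.d.\ increments $L_{n,i}$. Since the route, the decomposition, and the key lemmas all coincide with the paper's, there is nothing to contrast.
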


In the case of Ginibre matrices, $L_1,L_2,..L_d$ are independent. $L_i$ is $\chi^2_{d-i+1}$ random variable in case of real Ginibre matrix and $\chi^2_{2(d-i+1)}$ random variable in case of complex Ginibre martrix for all $i=1,2,..d$.   This means that in the case of products of Ginibre matrices  the first order fluctuations of ordered log-singular values(also log-eigenvalues)  are independent and Gaussian, implying the permanental nature of density of unordered log-singular values(also log-eigenvalues). This result has already been obtained in \cite{akemann1} using the  exact  densities  of singular values and eigenvalues.

In the case of truncated Haar unitary(orthogonal) matrices also, $L_1,L_2,..L_d$ are independent.  By applying $QR$ decomposition to $d \times d$ left uppermost sub-block of $m \times m$ Haar unitary(orthogonal) matrix and integrating out all the variables except  $L_1,L_2,..L_d$, we get the density of  $L_1,L_2,..L_d$.  Integrating out the auxiliary variables  here is the same as in when Schur decomposition is applied to the $d \times d$ sub-block to get the eigenvalue density of truncated Haar unitary matrices(see \cite{adhikari2016}, \cite{1}). In the case of truncated Haar unitary matrices, $L_1,L_2,..L_d$ are independent and distribution of  $L_i^2$ is $Beta(d-i+1,m-d)$ for all $i=1,2...d$. In the case of truncated Haar orthogonal matrices, $L_1,L_2,..L_d$ are independent and distribution of  $L_i^2$ is $Beta(\frac{d-i+1}{2},\frac{m-d}{2})$ for all $i=1,2...d$. This result about Lyapunov exponents of truncated Haar unitary(orthogonal)  matrices has already been obtained in \cite{forrester2} using the  exact  density of truncated Haar unitary(orthogonal)  matrices.

\section{Towards the end all eigenvalues are real}\label{final1}
Eigenvalues of a real matrix are either real or appear as complex conjugate pairs. The following theorem says, for sufficiently large $n$, with high probability all eigenvalues of $n$-th real product  matrix are  real. 
\begin{theorem}
Let $M_1,M_2\ldots$ be sequence of $i.i.d$ real invertible  isotropic random matrices of  order $d$, such that all singular values of $M_1$ have finite log-moment. Then,  Probability of the event, that all eigenvalues of product matrix $M_1M_2..M_n$ are real, goes to one as $n\to \infty$.
\end{theorem}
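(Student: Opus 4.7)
The plan is to derive the conclusion directly from Theorem \ref{theorem}, combined with the elementary observation that complex-conjugate eigenvalues of a real matrix share the same modulus. Under the hypothesis that all singular values of $M_1$ have finite log-moment, both $\mathbb{E}(\log^{+}\|M_1\|)$ and $\mathbb{E}(\log|\det(M_1)|)$ are finite, so Theorem \ref{theorem} applies in the real setting. Excluding the degenerate case in which $M_1$ is almost surely a random scalar multiple of a Haar orthogonal matrix (in that case, as Remark \ref{remark1} indicates, all stability exponents coincide, and moreover for $d\geq 2$ a Haar orthogonal matrix has non-real eigenvalues with positive probability, so the conclusion actually fails), the theorem yields the almost sure convergence $|\bm{\lambda}_n|^{1/n}\to\bm{\sigma}=\mathrm{diag}(\sigma_1,\ldots,\sigma_d)$ with strictly decreasing positive entries $\sigma_1>\sigma_2>\cdots>\sigma_d>0$.

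Next, from $|\lambda_{n,i}|^{1/n}\to\sigma_i$ almost surely for each $i$, together with $\sigma_i/\sigma_{i+1}>1$, I obtain $|\lambda_{n,i}|/|\lambda_{n,i+1}|\to\infty$ almost surely as $n\to\infty$. Hence there is an almost surely finite random integer $N$ such that for every $n\geq N$ the moduli $|\lambda_{n,1}|,\ldots,|\lambda_{n,d}|$ are pairwise distinct. Since $\mathcal{P}_n=M_1M_2\cdots M_n$ is a real matrix, any non-real eigenvalue must occur in a complex-conjugate pair, and the two elements of such a pair have a common modulus; pairwise distinctness of the moduli therefore rules out any non-real eigenvalue, so on the event $\{N\leq n\}$ every eigenvalue of $\mathcal{P}_n$ is real. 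Writing $A_n$ for the event in the theorem statement, this gives $\mathbb{P}(A_n)\geq\mathbb{P}(N\leq n)\to 1$ as $n\to\infty$.

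I do not anticipate a substantive obstacle: the heavy analytic input (existence and distinctness of stability exponents) is already supplied by Theorem \ref{theorem}, and the remainder of the argument is essentially the elementary fact that a real matrix with all eigenvalue moduli distinct has only real eigenvalues. The one genuine subtlety is the degenerate orthogonal-scalar case noted above, which must either be explicitly excluded from the hypotheses or tacitly treated as falling outside the intended scope of the statement.
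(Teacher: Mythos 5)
Your proof is correct and follows essentially the same route as the paper: invoke Theorem \ref{theorem} to get almost sure convergence of $|\bm{\lambda}_n|^{\frac{1}{n}}$ to a diagonal matrix with distinct positive entries, then observe that pairwise distinct moduli rule out complex-conjugate pairs, so all eigenvalues must be real with probability tending to one. Your caveat about the degenerate case is well taken and is actually a point the paper's proof silently passes over: if $M_1$ is almost surely a random scalar multiple of a Haar orthogonal matrix, Theorem \ref{theorem} gives $\bm{\sigma}$ equal to a multiple of the identity, the distinctness the argument relies on fails, and for $d\geq 2$ the conclusion itself is false, since the product is then a scalar times a Haar orthogonal matrix, which has non-real eigenvalues with positive probability for every $n$.
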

 \begin{proof}
 
 Since all singular values of $M_1$ have finite log-moment, both  $\mathbb{E}(\log^{+}\|M_1\|)$ and $\mathbb{E}(\log|\det(M_1)|)$ are finite. So, from Theorem \ref{theorem}, we have that $|\bm{\lambda}_n|^\frac{1}{n}$ converges almost surely to a  constant diagonal matrix $\bm{\sigma}$ with non-zero distinct diagonal entries.  It implies that moduli of  eigenvalues of product matrix $\mathcal{P}_n$ grow(or decay) exponentially at distinct rates as $n\to \infty$. For  sufficiently large $n$ the moduli of eigenvalues are distinct which means no two eigenvalues are complex conjugate of each other. Let
 $\bm{\lambda}_n= diag({\lambda}_{n,1},{\lambda}_{n,2}\ldots {\lambda}_{n,d})$  and 
$\bm{\sigma}= diag({\sigma}_{1},{\sigma}_{2}\ldots {\sigma}_{d})$. We have $|{\lambda}_{n,i}|^\frac{1}{n}\to \sigma_i$ for $1\leq i \leq d$ almost surely. Almost sure convergence implies convergence in probability. Therefore

$Prob\l(||{\lambda}_{n,i}|^\frac{1}{n}- \sigma_i|< \epsilon \; \forall \; 1\leq i \leq d \r)\to 1$ as $n\to \infty$ for any $\epsilon >0$. \\* Since $\sigma_{i}$'s are distinct, it is possible to  choose $\epsilon$ such that none of the intervals $(\sigma_i-\epsilon, \sigma_i+\epsilon)$, $1 \leq i \leq d$ intersect.  Then moduli of eigenvalues  are distinct,  
\\* $Prob\l(|{\lambda}_{n,i}|^\frac{1}{n} \in (\sigma_i-\epsilon, \sigma_i+\epsilon) \; \forall \; 1\leq i \leq d \r) \leq Prob\l(  {\lambda}_{n,1},{\lambda}_{n,2}\ldots {\lambda}_{n,d} \; are \; all \; real \r)$.  Finally \\*
$Prob\l(  {\lambda}_{n,1},{\lambda}_{n,2}\ldots {\lambda}_{n,d} \; are \; all \; real \r)\to 1 $ as $n\to \infty$.
\end{proof}

\noindent{\bf Acknowledgments:} The author is  grateful to Prof. Gernot Akemann for his questions and comments on the first draft. The author  likes to thank Prof. Peter Forrester, Prof. Gernot Akemann and  Manjunath Krishnapur for being a huge and constant source of inspiration to the author.

\bibliography{bibtex}

\providecommand{\bysame}{\leavevmode\hbox to3em{\hrulefill}\thinspace}
\providecommand{\MR}{\relax\ifhmode\unskip\space\fi MR }
\providecommand{\MRhref}[2]{%
  \href{http://www.ams.org/mathscinet-getitem?mr=#1}{#2}
}
\providecommand{\href}[2]{#2}
\begin{thebibliography}{10}

\bibitem{adhikari2016}
Kartick Adhikari, Nanda~Kishore Reddy, Tulasi~Ram Reddy, and Koushik Saha,
  \emph{Determinantal point processes in the plane from products of random
  matrices}, Ann. Inst. H. Poincaré Probab. Statist. \textbf{52} (2016),
  no.~1, 16--46.

\bibitem{akemann1}
Gernot Akemann, Zdzislaw Burda, and Mario Kieburg, \emph{Universal distribution
  of lyapunov exponents for products of ginibre matrices}, Journal of Physics
  A: Mathematical and Theoretical \textbf{47} (2014), no.~39, 395202.

\bibitem{1}
Gernot Akemann, Zdzislaw Burda, Mario Kieburg, and Taro Nagao, \emph{{Universal
  microscopic correlation functions for products of truncated unitary
  matrices}}, Journal of Physics: A Mathematical and Theoretical \textbf{47}
  (2014), no.~25.

\bibitem{ipsen2}
Gernot Akemann and Jesper~R. Ipsen, \emph{Recent exact and asymptotic results
  for products of independent random matrices}, Acta Physica Polonica B
  \textbf{46} (2015), no.~9, 1747--1784.

\bibitem{prm}
Philippe Bougerol and Jean Lacroix, \emph{Products of random matrices with
  applications to {S}chr\"odinger operators}, Progress in Probability and
  Statistics, vol.~8, Birkh\"auser Boston, Inc., Boston, MA, 1985. \MR{886674
  (88f:60013)}

\bibitem{peter}
Persi Diaconis and Peter~J. Forrester, \emph{A. hurwitz and the origins of
  random matrix theory in mathematics}, [arXiv:1512.09229v2] (2016).

\bibitem{forrester4}
Peter~J. Forrester, \emph{Lyapunov exponents for products of complex {G}aussian
  random matrices}, J. Stat. Phys. \textbf{151} (2013), no.~5, 796--808.
  \MR{3055376}

\bibitem{forrester1}
Peter~J Forrester, \emph{Probability of all eigenvalues real for products of
  standard gaussian matrices}, Journal of Physics A: Mathematical and
  Theoretical \textbf{47} (2014), no.~6, 065202.

\bibitem{forrester2}
\bysame, \emph{Asymptotics of finite system lyapunov exponents for some random
  matrix ensembles}, Journal of Physics A: Mathematical and Theoretical
  \textbf{48} (2015), no.~21, 215205.

\bibitem{goldman}
Isaac Goldhirsch, Pierre-Louis Sulem, and Steven~A. Orszag, \emph{Stability and
  {L}yapunov stability of dynamical systems: a differential approach and a
  numerical method}, Phys. D \textbf{27} (1987), no.~3, 311--337. \MR{913683
  (89f:34068)}

\bibitem{kavita}
Sajna Hameed, Kavita Jain, and Arul Lakshminarayan, \emph{Real eigenvalues of
  non-gaussian random matrices and their products}, Journal of Physics A:
  Mathematical and Theoretical \textbf{48} (2015), no.~38, 385204.

\bibitem{horn}
Alfred Horn, \emph{On the eigenvalues of a matrix with prescribed singular
  values}, Proc. Amer. Math. Soc. \textbf{5} (1954), 4--7. \MR{0061573
  (15,847d)}

\bibitem{ipsen}
J~R Ipsen, \emph{Lyapunov exponents for products of rectangular real, complex
  and quaternionic ginibre matrices}, Journal of Physics A: Mathematical and
  Theoretical \textbf{48} (2015), no.~15, 155204.

\bibitem{keiburg}
Jesper~R. Ipsen and Mario Kieburg, \emph{Weak commutation relations and
  eigenvalue statistics for products of rectangular random matrices}, Phys.
  Rev. E \textbf{89} (2014), 032106.

\bibitem{ortho}
Boris~A. Khoruzhenko, Hans-J\"urgen Sommers, and Karol \ifmmode~\dot{Z}\else
  \.{Z}\fi{}yczkowski, \emph{Truncations of random orthogonal matrices}, Phys.
  Rev. E \textbf{82} (2010), 040106.

\bibitem{mario}
Mario Kieburg and Holger K\"{o}sters, \emph{Exact relation between singular
  value and eigenvalue statistics}, [arXiv:1601.02586] (2016).

\bibitem{kuij}
Arno B.~J. Kuijlaars and Dries Stivigny, \emph{Singular values of products of
  random matrices and polynomial ensembles}, Random Matrices: Theory and
  Applications \textbf{03} (2014), no.~03, 1450011.

\bibitem{arul}
Arul Lakshminarayan, \emph{On the number of real eigenvalues of products of
  random matrices and an application to quantum entanglement}, Journal of
  Physics A: Mathematical and Theoretical \textbf{46} (2013), no.~15, 152003.

\bibitem{muir}
Robb~J. Muirhead, \emph{Aspects of multivariate statistical theory}, John Wiley
  \& Sons, Inc., New York, 1982, Wiley Series in Probability and Mathematical
  Statistics. \MR{652932 (84c:62073)}

\bibitem{newman}
Charles~M. Newman, \emph{The distribution of {L}yapunov exponents: exact
  results for random matrices}, Comm. Math. Phys. \textbf{103} (1986), no.~1,
  121--126. \MR{826860 (87h:60119)}

\bibitem{oseledec}
V.~I. Oseledec, \emph{A multiplicative ergodic theorem. {C}haracteristic
  {L}japunov, exponents of dynamical systems}, Trudy Moskov. Mat. Ob\v s\v c.
  \textbf{19} (1968), 179--210. \MR{0240280 (39 \#1629)}

\bibitem{raghu}
M.S. Raghunathan, \emph{A proof of oseledec’s multiplicative ergodic
  theorem}, Israel Journal of Mathematics \textbf{32} (1979), no.~4, 356--362
  (English).

\bibitem{unit}
Karol Zyczkowski and Hans-Jürgen Sommers, \emph{Truncations of random unitary
  matrices}, Journal of Physics A: Mathematical and General \textbf{33} (2000),
  no.~10, 2045.

\end{thebibliography}
\bibliographystyle{amsplain}

\end{document}